\newcommand\reallywidehat[1]{%
\savestack{\tmpbox}{\stretchto{%
  \scaleto{%
    \scalerel*[\widthof{\ensuremath{#1}}]{\kern.1pt\mathchar"0362\kern.1pt}%
    {\rule{0ex}{\textheight}}
  }{\textheight}%
}{2.4ex}}%
\stackon[-6.9pt]{#1}{\tmpbox}%
}
\DeclareSymbolFont{rsfs}{U}{rsfs}{m}{n}
\DeclareSymbolFontAlphabet{\mathscrsfs}{rsfs}
\numberwithin{equation}{section}
\newtheoremstyle{myexample} 
    {\topsep}                    
    {\topsep}                    
    {\rm }                   
    {}                           
    {\bf }                   
    {.}                          
    {.5em}                       
    {}  
\newtheoremstyle{myremark} 
    {\topsep}                    
    {\topsep}                    
    {\rm}                        
    {}                           
    {\bf}                        
    {.}                          
    {.5em}                       
    {}  
\newtheorem{claim}{Claim}[section]
\newtheorem{lemma}[claim]{Lemma}
\newtheorem{theorem}{Theorem}
\newtheorem{proposition}[claim]{Proposition}
\newtheorem{definition}[claim]{Definition}
\theoremstyle{myremark}
\theoremstyle{myremark}
\theoremstyle{myexample}
\definecolor{darkgreen}{rgb}{0.0, 0.5, 0.0}
\newcommand{\bea}{\begin{eqnarray}}
\newcommand{\eea}{\end{eqnarray}}
\newcommand{\<}{\langle}
\renewcommand{\>}{\rangle}
\def\<{\langle}
\def\>{\rangle}
\def\b0{{\boldsymbol{0}}}
\renewcommand{\b}{\mathbf{b}}
\title{Approximate Ground States of Hypercube Spin Glasses are Near Corners}
\author{Mark Sellke\\Stanford University, Department of Mathematics}
\date{}
\begin{document}

\maketitle

\abstract{\noindent We show that with probability exponentially close to $1$, all near-maximizers of any mean-field mixed $p$-spin glass Hamiltonian on the hypercube $[-1,1]^N$ are near a corner. This confirms a recent conjecture of Gamarnik and Jagannath. The proof is elementary and extends to arbitrary polytopes with $e^{o(N^2)}$ faces.}

\section{Introduction}

The present paper concerns mixed $p$-spin glasses on the hypercube $[-1,1]^N$. Such a model is specified by a sequence $\gamma_1,\gamma_2,\dots \geq 0$ of non-negative real numbers encapsulated in the mixture function \[\xi(t)=\sum_{p=1}^{\infty}\gamma_p^2 t^p.\] For each $p\in\mathbb Z^+$ we sample i.i.d. Gaussian variables $\{g_{i_1,i_2,\dots,i_p}\}_{i_1,i_2,\dots,i_p\in [N]}$ and study the resulting random Hamiltonian energy function

\[H_N(x)=\sum_{p=1}^{\infty} \frac{\gamma_p}{N^{(p+1)/2}}\sum_{i_1,\dots,i_p=1}^N g_{i_1,\dots,i_p}x_{i_1}\dots x_{i_p}. \] Equivalently, $H_N(\cdot)$ is a Gausian process with covariance \[\mathbb E[H_N(x)H_N(x')]=\frac{1}{N}\xi(\langle x,x'\rangle).\] We assume the $\gamma_p$ decay exponentially, i.e. $\lim\sup_{p\to\infty} \frac{\log \gamma_p}{p}<0$, so that there are no issues regarding convergence. Here and throughout we use a normalized inner product $\langle x,y\rangle=\frac{1}{N}\sum_i x_iy_i$ for $x,y\in\mathbb R^N$ and similarly define $|x|_2=\sqrt{\frac{1}{N}\sum_i x_i^2}.$ Hence the hypercube $[-1,1]^N$ has diameter $2$. This scaling is chosen for convenience as it makes all relevant quantities dimension-independent. We will further assume throughout that $\gamma_p$ is strictly positive for some $p\geq 2$ so that the model is a genuine spin glass. The mixture function $\xi$ is always taken to be fixed while sending $N\to\infty$.

We focus on the (random) set of near-maximizers of $H_N(x)$. This set is intimately related to the Gibbs measure $\mu(dx)\propto e^{\beta NH_N(x)}dx$ in the low temperature regime with $\beta$ large. The Gibbs measure $\mu(dx)$ is typically studied not on the continuous cube $[-1,1]^N$ but on the discrete cube $\{\pm 1\}^N$, where a great deal is known. A key quantity of interest is the free energy $F_N(\beta)=\frac{1}{N}\log\sum_{x\in \{-1,1\}^N}e^{\beta NH_N(x)}$. The limiting value (in probability) of $F_N(\beta)$ is famously given by the Parisi formula proposed in \cite{parisi1979infinite} and proved in \cite{talagrand2006parisi,panchenko2013parisi}. The existence (but not the identification) of the limiting value for large $N$ was established earlier in \cite{guerra2002thermodynamic}. 

The Hamiltonian $H_N(\cdot)$ is non-convex and may have exponentially many near-maxima \cite{chatterjee2009disorder,ding2015multiple,chen2018energy}. Moreover the structure of these near-maxima is highly nontrivial, as for each $\beta$ the Gibbs measure on $\{-1,1\}^N$ is known to concentrate on a random approximate ultrametric with high probability in so-called generic mixed $p$-spin models with $\sum_{p:\gamma_p>0}\frac{1}{p}=\infty$ \cite{jagannath2017approximate,chatterjee2019average}. Also of interest are the results \cite{auffinger2013random,auffinger2013complexity,subag2017complexity,arous2019landscape} which study the landscape of critical points for spherical spin glasses and the related spiked tensor models, computing the exponential growth rates for the number of local maxima and critical points with a given energy value. 

Let us now turn from the discrete cube $\{-1,1\}^N$ to the continuous cube $[-1,1]^N$. The free energy in this case takes a similar form as in the Ising case by the work of \cite{panchenko2005free,jagannath2020unbalanced}. Regarding the ground states, it is not difficult to see that \emph{some} near-maximum of $H_N$ on $[-1,1]^N$ must lie on a corner in $\{\pm 1\}^N$. Indeed, one may ignore the small contribution of terms of $H_N$ which are \emph{not} multi-linear and then observe that any multilinear function of the coordinates $x_1,\dots,x_N$ is maximized at some corner of the cube. However this does not rule out the existence of other near-maxima of $H_N$ which are far from a corner and therefore missed by considering the discrete cube. 

It was conjectured in \cite[Conjecture 3.6]{gamarnik2019overlap} that in fact \emph{all} near-maxima of $H_N$ on $[-1,1]^N$ must occur near the corners with high probability as $N\to\infty$. In other words, to understand the set of near-maxima of $H_N$ on $[-1,1]^N$, it is in some sense sufficient to understand it on the discrete cube. Conditional on (an implication of) this result, \cite{gamarnik2019overlap} prove that approximate message passing algorithms fail to approximately optimize pure $p$-spin models with $\gamma_p\neq 0$ for exactly $1$ value of $p$, over $[-1,1]^N$ when $p\geq 4$ is even. Moreover their proof seems to apply to any $\xi$ satisfying a suitable \emph{overlap gap} property, perhaps with the requirement $\gamma_1=0$. By contrast for mixture functions $\xi$ satisfying a strong \emph{no overlap gap} condition, approximate message passing algorithms are able to efficiently locate near-maxima of $H_N$ with high probability \cite{mon18,ams20}.

Our main result is that all near-maxima of $H_N$ on $[-1,1]^N$ are close to a corner in $\{\pm 1\}^N$, confirming the conjecture of \cite{gamarnik2019overlap}. Moreover we obtain an explicit quantitative dependence, though we do not expect it to be tight. Below we use the notation $\Omega_{\varepsilon,\eta}(N)$ to represent a quantity bounded below by $C(\varepsilon,\eta)N$ for some constant $C(\varepsilon,\eta)$ independent of $N$ when $N\geq N_0(\varepsilon,\eta)$ is sufficiently large.

\begin{theorem}

\label{thm:main}

Let $\xi$ define a mixed $p$-spin model and fix $\varepsilon,\eta>0$. Then for $N$ sufficiently large, with probability $1-e^{-\Omega_{\varepsilon,\eta}(N)}$ all $x\in [-1,1]^N$ with \[H_N(x)\geq \max_{y\in [-1,1]^N}H_N(y)-\int_{1-\varepsilon}^1 \sqrt{(1-t)\xi''(t)}dt+\eta\] satisfy $|x|_2^2\geq 1-\varepsilon.$ 

\end{theorem}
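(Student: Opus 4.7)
The plan is to show that with probability $1-e^{-\Omega(N)}$, for every $x\in[-1,1]^N$ with $q:=|x|_2^2\leq 1-\varepsilon$ there exists a corner $\sigma\in\{\pm 1\}^N$ with $H_N(\sigma)-H_N(x)\geq I-\eta$ where $I=\int_{1-\varepsilon}^{1}\sqrt{(1-t)\xi''(t)}\,dt$. Since $\max_{y\in[-1,1]^N}H_N(y)\geq H_N(\sigma)$, this immediately implies the theorem. It therefore suffices to lower bound, uniformly in $x$ with $|x|_2^2\leq 1-\varepsilon$, the conditional gain $\max_{\sigma\in\{\pm 1\}^N}(H_N(\sigma)-H_N(x))$.

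For each fixed such $x$, I would construct $\sigma$ by a Subag-style iterative branching procedure. Partition $[q,1]$ into $K$ intervals of width $\delta$, and inductively produce $y_0=x,y_1,\ldots,y_K\in[-1,1]^N$ with $|y_k|_2^2=q+k\delta$ and $y_K\in\{\pm 1\}^N$. At step $k$ one considers a large family of candidate successors to $y_k$, for instance indexed by Rademacher $\theta\in\{\pm 1\}^N$ with step $v_\theta$ supported on the coordinate-wise slack $1-|y_{k,i}|$, and picks the one maximizing $H_N(y_k+v_\theta)$. The goal is a per-step increment of $\sqrt{(1-q_k)\xi''(q_k)}\,\delta-o(\delta)$ with $q_k=|y_k|_2^2$; summing these as $\delta\to 0$ telescopes to $\int_{q}^{1}\sqrt{(1-t)\xi''(t)}\,dt\geq I$.

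The main obstacle is the per-step analysis, specifically isolating the two factors $\sqrt{1-q_k}$ and $\sqrt{\xi''(q_k)}$. The factor $1-q_k$ is geometric: moves that keep $y_k+v$ in $[-1,1]^N$ and realize the prescribed increase in $\ell_2$ norm are constrained to the slack directions, whose $\ell_2$ budget is controlled by $1-q_k$. The factor $\xi''(q_k)$ is curvature-driven: after conditioning on $H_N(y_k)$, the conditional covariance of $H_N(y_k+v_\theta)-H_N(y_k)$ admits an expansion in $\xi$ around $q_k$, and the candidate family must be arranged so that the naive leading term involving $\xi'(q_k)$ from an orthogonal step cancels, leaving the $\xi''(q_k)$ curvature contribution. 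This cancellation dictates the shape of the candidate family (essentially Gaussian perturbations supported on the slack subspace), and a Gaussian maximum inequality (Sudakov--Fernique, or Borell-type concentration) then extracts the desired gain from the exponentially many candidates.

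To upgrade from a single $x$ to all $x$ simultaneously: Gaussian concentration at each step gives failure probability $e^{-\Omega(N)}$, surviving a union bound over the $K=O(1/\delta)$ steps for each fixed $x$. Then cover $\{x\in[-1,1]^N:|x|_2^2\leq 1-\varepsilon\}$ by a net of size $e^{O(N)}$ at resolution $\eta/C$, and use that $H_N$ is Lipschitz on the cube with $O(1)$ Lipschitz constant with probability $1-e^{-\Omega(N)}$ to pass the bound from the net to every $x$. A final union bound yields the theorem with overall probability $1-e^{-\Omega_{\varepsilon,\eta}(N)}$.
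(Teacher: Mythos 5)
Your overall architecture matches the paper's: a greedy incremental ascent from any $x$ with $|x|_2^2\leq 1-\varepsilon$ toward the boundary, with per-step gain $\sqrt{(1-q_k)\xi''(q_k)}\,\delta-o(\delta)$ Riemann-summing to $\int_{1-\varepsilon}^1\sqrt{(1-t)\xi''(t)}\,dt$, plus a net-and-Lipschitz argument for uniformity. However, there are two genuine gaps. First, the per-step mechanism is left as a hoped-for cancellation of the $\xi'(q_k)$ term followed by a ``Gaussian maximum inequality,'' but once you arrange $v\perp\nabla H_N(y_k)$ (or use the symmetry $v\mapsto -v$ to discard the gradient term), the quantity you are maximizing over candidates is the Rayleigh quotient of $\nabla^2H_N(y_k)$ on the subspace of unsaturated coordinates intersected with $y_k^{\perp}$; the gain $2\zeta(q)\sqrt{1-q}\cdot\tfrac12|v|_2^2$ comes from the fact that this restriction is a scaled $GOE$ of dimension at least $(1-q)N$ (Markov's inequality gives at least $(1-q)N$ coordinates with $|x_i|<1$), whose edge sits at $2\zeta(q)\sqrt{1-q}$. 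Your ``$\ell_2$ slack budget'' heuristic does not produce this factor: what matters is the \emph{dimension} of the available coordinate subspace entering the semicircle edge, not an $\ell_2$ constraint on the step.

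Second, and more seriously, your probabilistic bookkeeping does not close. The intermediate points $y_1,\dots,y_K$ are chosen adaptively using the values of $H_N$ at exponentially many candidates, so ``conditioning on $H_N(y_k)$'' does not justify applying a fresh concentration bound at $y_k$; you need a bound that holds \emph{uniformly} over all points of the cube and all large coordinate subspaces, not just over a net of starting points $x$. A Borell--TIS type bound for a single point and subspace fails at rate $e^{-c(\delta)N}$ with $c(\delta)\to 0$ as $\delta\to 0$, while the union bound is over $2^N$ subsets and a net of size $e^{O_{\delta'}(N)}$ with $\delta'\to 0$; these constants do not mesh. The paper resolves this with the large deviation principle for the $k$-th largest eigenvalue of $GOE(N)$, which gives failure probability $e^{-\Omega_{\delta}(N^2)}$ (Proposition~\ref{prop:LDP}) and therefore trivially survives any $e^{O(N)}$ union bound; the resulting deterministic ``$\delta$-good'' event then makes the greedy iteration a purely deterministic argument with no conditioning issues. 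Without this $N^2$-rate ingredient (or some substitute for it), your union bound step fails.
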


The idea of the proof is based on that of \cite{subag2018following}, which uses uniform control of top eigenvalues of the Hessian $\nabla^2 H_N(x)$ to optimize mean field spin glasses on the sphere via small local steps. Our main insight is that this idea continues to work when a constant fraction of coordinates are fixed at $\pm 1$, allowing us to substantially increase the energy $H_N$ from any starting point far from a corner even after reaching the boundary of $[-1,1]^N$. Our proof is elementary and avoids any reliance on complicated Parisi-type variational formulas which characterize much of the spin glass literature. In fact it does not even require the existence of a limiting value for $\max_{y\in [-1,1]^N}H_N(y)$. Due to the simplicity of our approach, Theorem~\ref{thm:main} extends to quite general polytopes as we explain in Section~\ref{sec:gen}.

Finally let us mention two alternative approaches to our main result, at least on the cube. First, we believe that \cite[Theorems 8, 9]{chen2019generalized} should imply Theorem~\ref{thm:main}. In their language it suffices to check that $TAP^{\infty}(\mu)$ is bounded away from $0$ for $\mu$ a probability measure on $[-1,1]$ with $L^2$ norm bounded away from $1$. See also \cite{chen2018generalized,subag2018free} for positive temperature and spherical analogs. Second, \cite[Theorem 1.2]{jagannath2020unbalanced} gives a Parisi-type formula for the ground state energy of mixed $p$-spin models on the subset of $[-1,1]^N$ with any asymptotically fixed $L^2$ norm in $[0,1]$, so showing that their formula is strictly increasing in this $L^2$ norm would imply Theorem~\ref{thm:main}. However a proof produced by either method would be far less elementary than the proof we present, and even with significant effort might not extend beyond highly structured classes of polytopes.

\section{Proof of Theorem~\ref{thm:main}}

We set $\zeta(t)=\sqrt{\xi''(t)}.$ By our assumption that $\gamma_p>0$ for some $p\geq 2$ it follows that $\zeta(t)>0$ for any $t>0$. Below and throughout, given $M\in Mat_{N\times N}(\mathbb R)$ and a subspace $W\subseteq \mathbb R^N$ we set $M|_W=P_W^{\top}MP_W$ where $P_W:\mathbb R^N\to W$ is the orthogonal projection onto $W$. In other words $M|_W$ is the restriction of $M$ to $W$ as a bilinear form, and is a matrix of size $\dim(W)\times \dim(W)$. 

\begin{proposition}
\label{prop:GOE}
For nonzero $x\in\mathbb R^N$ let $x^{\perp}$ denote the orthogonal subspace to $x$. For any fixed subspace $W\subseteq x^{\perp}$, the restriction $\nabla^2 H_N(x)|_{W}$ of the Hessian of $H_N$ to $x^{\perp}$ has the distribution of a $GOE(\dim(W))$ matrix times $\zeta(|x|_2^2)\sqrt{\frac{\dim(W)}{N}}.$

\end{proposition}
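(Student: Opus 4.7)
Since $H_N$ is a centered Gaussian process with covariance kernel $K(x,y)=\frac{1}{N}\xi(\langle x,y\rangle)$, the Hessian $M=\nabla^2 H_N(x)$ is a centered symmetric Gaussian random matrix, and so is its restriction $M|_W$. My plan is therefore to match the covariance of the entries of $M|_W$ in a suitable basis with those of a scaled GOE matrix.

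The main input is the covariance $\mathbb{E}[M_{i_1i_2}M_{j_1j_2}]$, which I obtain by differentiating $K(x,y)$ twice in $x$ and twice in $y$ and then setting $y=x$. Using $\partial_{x_i}\langle x,y\rangle=y_i/N$ and $\partial_{x_i}\partial_{y_j}\langle x,y\rangle=\delta_{ij}/N$, four applications of the chain rule produce exactly three families of contributions: an anisotropic piece $\xi''''(|x|_2^2)\,x_{i_1}x_{i_2}x_{j_1}x_{j_2}/N^5$; a $\xi'''(|x|_2^2)/N^4$ piece consisting of four summands, each of the form (one Kronecker $\delta$ pairing an $i$-index to a $j$-index)$\times$(the remaining $x_i$-coordinate)$\times$(the remaining $x_j$-coordinate); and the isotropic piece $\xi''(|x|_2^2)(\delta_{i_1j_1}\delta_{i_2j_2}+\delta_{i_1j_2}\delta_{i_2j_1})/N^3$.

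I then choose a basis $e_1,\ldots,e_d$ of $W$ that is orthonormal for the normalized inner product, so $\sum_i(e_a)_i(e_b)_i=N\delta_{ab}$, and compute
\[
\mathbb{E}\bigl[(M|_W)_{ab}(M|_W)_{cd}\bigr]=\sum_{i_1,i_2,j_1,j_2}(e_a)_{i_1}(e_b)_{i_2}(e_c)_{j_1}(e_d)_{j_2}\,\mathbb{E}[M_{i_1i_2}M_{j_1j_2}].
\]
The decisive observation is that every summand in the $\xi''''$ and $\xi'''$ contributions contains at least one factor of the form $\sum_i(e_\ast)_i x_i=N\langle e_\ast,x\rangle$, which vanishes because $W\subseteq x^\perp$. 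Only the isotropic $\xi''$ piece survives, and its two delta pairings contract to
\[
\mathbb{E}\bigl[(M|_W)_{ab}(M|_W)_{cd}\bigr]=\frac{\xi''(|x|_2^2)}{N}\bigl(\delta_{ac}\delta_{bd}+\delta_{ad}\delta_{bc}\bigr).
\]
Since both $M|_W$ and $\zeta(|x|_2^2)\sqrt{d/N}\cdot G$ (with $G\sim GOE(d)$ normalized so that $\mathbb{E}[G_{ab}G_{cd}]=\frac{1}{d}(\delta_{ac}\delta_{bd}+\delta_{ad}\delta_{bc})$) are centered Gaussian symmetric matrices with identical covariance, they agree in distribution.

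The substantive content of the argument is this orthogonality cancellation: the unrestricted Hessian is \emph{not} GOE-like, because the higher derivatives of $\xi$ produce anisotropic contributions concentrated in the $x$-direction, but restricting to any subspace of $x^\perp$ kills precisely those contributions and leaves a pure scaled GOE. The only real difficulty is bookkeeping---keeping the normalized versus standard inner product and the GOE normalization convention consistent throughout the chain-rule calculation.
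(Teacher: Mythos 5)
Your argument is correct, and the computation checks out: differentiating $K(x,y)=\frac{1}{N}\xi(\langle x,y\rangle)$ twice in each argument does yield exactly the three families of terms you list (with the powers $N^{-5}$, $N^{-4}$, $N^{-3}$ as stated), every $\xi''''$- and $\xi'''$-summand carries a factor $\sum_i (e_*)_i x_i=0$, and the surviving $\xi''$-contraction $\frac{\xi''(|x|_2^2)}{N}(\delta_{ac}\delta_{bd}+\delta_{ad}\delta_{bc})$ matches the covariance of $\zeta(|x|_2^2)\sqrt{\dim(W)/N}\cdot GOE(\dim W)$ under the paper's normalization; since all quantities are jointly Gaussian and centered, equality of covariances gives equality in law. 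The paper takes a slightly different route to the same second-moment computation: it first rotates so that $x=(x_1,0,\dots,0)$ and invokes the computation of Subag for the case $W=x^{\perp}$ (where the anisotropic terms are confined to the first coordinate direction and so visibly disappear upon restriction), and then handles general $W\subseteq x^{\perp}$ by the separate observation that a $\dim(W)\times\dim(W)$ principal submatrix of a $GOE(N-1)$ matrix is a rescaled $GOE(\dim W)$. Your version buys self-containedness and treats arbitrary $W$ in a single contraction without rotational invariance or the principal-submatrix step; the paper's version buys brevity by delegating the computation to a reference. One presentational caution: you should state explicitly that you are pairing the \emph{standard} partial derivatives $\partial_{i}\partial_{j}H_N$ with basis vectors satisfying $\sum_i(e_a)_i(e_b)_i=N\delta_{ab}$ --- this combination is exactly equivalent to the paper's $P_W^{\top}\nabla^2 H_N P_W$ in a Euclidean-orthonormal basis with the Hessian taken relative to the normalized inner product (as the Rayleigh quotients in Lemma~\ref{lem:increment} require), and mixing the conventions differently would introduce a spurious power of $N$.
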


By a $GOE(N)$ matrix we mean a symmetric $N\times N$ matrix of independent centered Gaussians in which diagonal entries have variance $\frac{2}{N}$ and off-diagonals have variance $\frac{1}{N}$. In the case $W=x^{\perp}$, Proposition~\ref{prop:GOE} can be shown as in \cite[Equation (3.10)]{subag2018following} by setting $x=(x_1,0,\dots,0)$ using rotational invariance and performing a simple direct computation. See also the text following \cite[Equation (1.8)]{subag2018following}. Proposition~\ref{prop:GOE} then follows for general subspaces $W\subseteq x^{\perp}$ because the upper $\dim(W)\times \dim(W)$ corner of a $GOE(N-1)$ matrix is a $GOE(\dim(W))$ matrix up to scaling.

Denote the eigenvalues of a symmetric matrix $G$ in decreasing order by $\lambda_1(G)\geq\lambda_2(G)\geq\dots$. Recall that $\lambda_1(G)\approx 2$ holds with high probability when $G\sim GOE(N)$. In fact the following fundamental result states that many eigenvalues are at least $2-\delta$ with extremely high $1-e^{-\Omega_{\delta}(N^2)}$ probability. It follows from \cite[Theorem 1.1]{arous1997large} and is also used in the proof of \cite[Lemma 3]{subag2018following}. See also \cite[Theorem 2.6.1]{anderson2010introduction}. 

\begin{proposition}

\label{prop:LDP}

For any $\delta>0$ and fixed positive integer $k$, if $G\sim GOE(N)$ then \[\mathbb P[\lambda_k(G)\geq 2-\delta]\geq 1-e^{-\Omega_{\delta,k}(N^2)}.\]

\end{proposition}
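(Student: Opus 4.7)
The plan is to deduce Proposition~\ref{prop:LDP} directly from the Ben Arous--Guionnet large deviations principle (LDP) for the empirical spectral measure of the GOE at speed $N^2$, as stated in \cite[Theorem 2.6.1]{anderson2010introduction} and originally proved in \cite{arous1997large}. That result says the empirical measure $\hat\mu_N := \frac{1}{N}\sum_{i=1}^{N}\delta_{\lambda_i(G)}$ of a $GOE(N)$ matrix satisfies an LDP at speed $N^2$ with a good rate function $I$ whose unique zero is the semicircle law $\mu_{\rm sc}$ supported on $[-2,2]$. The normalization of $G$ used in this paper (diagonals variance $2/N$, off-diagonals variance $1/N$) is precisely the one for which $\mu_{\rm sc}$ has support $[-2,2]$, so no rescaling is needed.

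The key observation is that $\{\lambda_k(G) < 2-\delta\}$ is literally the event $\{\hat\mu_N([2-\delta,\infty)) \leq (k-1)/N\}$. Since $\mu_{\rm sc}$ assigns mass
\[
c_\delta := \frac{1}{2\pi}\int_{2-\delta}^{2}\sqrt{4-x^2}\,dx > 0
\]
to $[2-\delta,\infty)$, once $N$ is large enough that $(k-1)/N \leq c_\delta/2$ the above event is contained in
\[
\mathcal A_\delta := \bigl\{\nu : \nu([2-\delta,\infty)) \leq c_\delta/2\bigr\},
\]
which is weakly closed by the Portmanteau theorem since $[2-\delta,\infty)$ is closed. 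Because $\mu_{\rm sc}\notin\mathcal A_\delta$ and $I$ is a good rate function uniquely vanishing at $\mu_{\rm sc}$, the quantity $r(\delta) := \inf_{\nu\in\mathcal A_\delta}I(\nu)$ is strictly positive. The LDP upper bound then yields
\[
\mathbb{P}\bigl[\lambda_k(G) < 2-\delta\bigr] \leq \mathbb{P}[\hat\mu_N \in \mathcal A_\delta] \leq e^{-(r(\delta) - o(1))N^2},
\]
which is the bound claimed in the proposition (after absorbing the $o(1)$ into the implicit constant and taking $N$ large).

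There is no genuine obstacle: the argument is just bookkeeping once one has the LDP for the empirical measure in hand. The only points worth checking are the GOE normalization (matched above) and the weak closedness of $\mathcal A_\delta$ (immediate from Portmanteau). An alternative route would be to use concentration of each $\lambda_k$ around its semicircle quantile together with an upper-tail large deviation estimate, but invoking the empirical-measure LDP is both shorter and matches the reference \cite{arous1997large} already cited.
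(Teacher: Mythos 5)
Your overall route is exactly the one the paper points to: the paper offers no proof of Proposition~\ref{prop:LDP}, only the citation to the Ben Arous--Guionnet empirical-measure LDP at speed $N^2$, and your reduction of the eigenvalue statement to that LDP (via the identity $\{\lambda_k(G)<2-\delta\}=\{\hat\mu_N([2-\delta,\infty))\le (k-1)/N\}$ and the positivity of the rate on a closed set avoiding $\mu_{\rm sc}$) is the standard and intended deduction.

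There is, however, one genuine error in the written argument: the set $\mathcal A_\delta=\{\nu:\nu([2-\delta,\infty))\le c_\delta/2\}$ is \emph{not} weakly closed, and the Portmanteau theorem gives the opposite of what you need here. For a closed set $F$, Portmanteau yields $\limsup_n\nu_n(F)\le\nu(F)$, so it is the \emph{superlevel} sets $\{\nu:\nu(F)\ge c\}$ that are closed; sublevel sets of $\nu\mapsto\nu(F)$ can fail to be closed. Concretely, $\delta_{2-\delta-1/n}\in\mathcal A_\delta$ for every $n$, yet the weak limit $\delta_{2-\delta}$ assigns mass $1$ to $[2-\delta,\infty)$ and so lies outside $\mathcal A_\delta$. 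Since the LDP upper bound applies only to closed sets, this step as written is invalid. The fix is immediate: define $\mathcal A_\delta$ using the \emph{open} half-line, $\mathcal A_\delta:=\{\nu:\nu((2-\delta,\infty))\le c_\delta/2\}$ with $c_\delta:=\mu_{\rm sc}((2-\delta,2))>0$. This set is weakly closed by the other half of Portmanteau ($\liminf_n\nu_n(G)\ge\nu(G)$ for open $G$), it still contains the event in question because $\hat\mu_N((2-\delta,\infty))\le\hat\mu_N([2-\delta,\infty))\le (k-1)/N$, and it still excludes $\mu_{\rm sc}$. (Alternatively, pass to the closure of your $\mathcal A_\delta$ and check, again via the open-set half of Portmanteau, that $\mu_{\rm sc}$ is not in that closure.) With this correction the rest of your argument, including the use of goodness of $I$ to get $r(\delta)>0$ and the absorption of the $k$-dependence into the threshold on $N$, is sound.
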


We also require the following apriori uniform bound on the derivatives of $H_N$ taken from \cite{arous2020geometry}. Below $\mathbb{B}^{N}$ denotes the unit ball $\{\boldsymbol{\sigma}\in\mathbb R^N:|\boldsymbol{\sigma}|_2\leq 1\}$ while $\mathbb S^{N-1}$ denotes the unit sphere $\{v\in\mathbb R^N:|v|_2= 1\}$. Here again we use the rescaled norm in which $|v|_2=\sqrt{\frac{\sum_i v_i^2}{N}}$; moreover our definition of $H_N$ differs from that of \cite{arous2020geometry} by a factor $N$. This is why the derivative estimates below are of constant order unlike in \cite{arous2020geometry}.

\begin{lemma}{\cite[Corollary 59]{arous2020geometry}}

\label{lem:lip} Let $H_N$ be the Hamiltonian for a mixed $p$-spin model with fixed mixture $\xi$ satisfying $\lim_{p\to\infty} \frac{\log \gamma_p}{p}<0$. For appropriate $C>0$ and $i=1,2,3$ we have:

\[\mathbb{P}\left[\forall \boldsymbol{\sigma} \in \mathbb{B}^{N}, \forall v\in\mathbb S^{N-1}:\left|\partial_v^i H_{N}(\boldsymbol{\sigma})\right|<C\right] \geq 1-e^{-\Omega(N)}.\]

\[\mathbb{P}\left[\forall \boldsymbol{\sigma}, \boldsymbol{\sigma}^{\prime} \in \mathbb{B}^{N}:\left\|\nabla^{2} H_{N}(\boldsymbol{\sigma})-\nabla^{2} H_{N}\left(\boldsymbol{\sigma}^{\prime}\right)\right\|_{o p}<C\left\|\boldsymbol{\sigma}-\boldsymbol{\sigma}^{\prime}\right\|\right] \geq 1-e^{-\Omega(N)}.\] 

\end{lemma}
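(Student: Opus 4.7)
My plan is to view each directional derivative $f_i(\sigma,v) := \partial_v^i H_N(\sigma)$, $i = 1, 2, 3$, as a centered Gaussian random field on the compact set $K = \mathbb{B}^N \times \mathbb{S}^{N-1}$, and to apply standard Gaussian process concentration (Dudley chaining together with the Borell--TIS inequality). The key observation is that while the supremum of $f_i$ over $K$ is of constant order, the variance at any single $(\sigma, v)$ is only of order $1/N$; the gap between these two scales is exactly what allows an $\exp(\Omega(N))$ covering entropy of $K$ to be absorbed while still leaving $e^{-\Omega(N)}$ probabilities for the final tail event.

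To set up the calculation, I would start from the covariance kernel $\mathbb{E}[H_N(\sigma) H_N(\sigma')] = \xi(\langle\sigma,\sigma'\rangle)/N$ and differentiate it $i$ times in $v$ on each slot. The resulting expression for $\mathrm{Cov}(f_i(\sigma,v), f_i(\sigma',v'))$ is a finite sum of terms of the form $\frac{1}{N}\xi^{(k)}(\langle\sigma,\sigma'\rangle)$ multiplied by products of rescaled inner products such as $\langle v, \sigma\rangle$, $\langle v, \sigma'\rangle$, $\langle v, v'\rangle$, with $k \leq 2i$. The decay assumption $\limsup \log\gamma_p/p < 0$ implies $\xi$ is real-analytic on a neighborhood of $[-1,1]$ with all derivatives uniformly bounded there, and Cauchy--Schwarz gives $|\langle v,\sigma\rangle| \leq 1$ on $K$; specializing to $(\sigma',v') = (\sigma, v)$ then yields the pointwise variance bound $\mathrm{Var}(f_i(\sigma,v)) \leq C_\xi/N$. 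Repeating the same calculation at one higher order, and telescoping via the fundamental theorem of calculus in $\sigma$ and an analogous identity in $v$, gives the Gaussian intrinsic-metric estimate $d_{f_i}((\sigma,v),(\sigma',v')) := \sqrt{\mathbb{E}[(f_i(\sigma,v)-f_i(\sigma',v'))^2]} \leq \frac{C_\xi}{\sqrt{N}}(|\sigma-\sigma'|_2 + |v-v'|_2)$.

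With these bounds in hand the rest is a routine Dudley integral. The rescaled covering number satisfies $\log N(K, |\cdot|_2, \delta) = O(N \log(1/\delta))$ for $\delta \leq 1$, hence $\log N(K, d_{f_i}, \epsilon) = O(N \log(C/(\epsilon\sqrt{N})))$ for $\epsilon \leq D_{f_i} = O(1/\sqrt{N})$. Under the substitution $u = \epsilon\sqrt{N}$ the Dudley integral $\int_0^{D_{f_i}}\sqrt{\log N(K, d_{f_i}, \epsilon)}\,d\epsilon$ collapses to $\int_0^{O(1)} \sqrt{\log(C/u)}\,du = O_\xi(1)$, giving $\mathbb{E}[\sup_K f_i] = O_\xi(1)$. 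The Borell--TIS inequality then yields $\mathbb{P}[\sup_K |f_i| > O_\xi(1) + t] \leq 2\exp(-t^2 N/(2 C_\xi))$; choosing $t$ a sufficiently large constant and union-bounding over $i \in \{1,2,3\}$ produces the first inequality of the lemma with the desired $e^{-\Omega(N)}$ probability.

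For the Hessian Lipschitz bound, the fundamental theorem of calculus along the segment $[\sigma, \sigma']$ gives $a^{\top}(\nabla^2 H_N(\sigma) - \nabla^2 H_N(\sigma'))\,b = \int_0^1 \partial_{\sigma'-\sigma}\partial_a \partial_b H_N(\sigma + t(\sigma' - \sigma))\,dt$, so the operator-norm Lipschitz constant of the Hessian is controlled by $\sup_{\tau, a, b, c} |\partial_a \partial_b \partial_c H_N(\tau)|$ with $a, b, c$ ranging over unit vectors. Since $\nabla^3 H_N(\tau)$ is a symmetric tensor, polarization of symmetric trilinear forms bounds this supremum by a universal constant times $\sup_{\tau, v} |\partial_v^3 H_N(\tau)|$, which is $O(1)$ with probability $1 - e^{-\Omega(N)}$ by the $i = 3$ case already established. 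The main technical obstacle is the variance calculation and its extension to mixed $(i+1)$-st directional derivatives: one must track the combinatorial expansion produced by differentiating $\xi(\langle\sigma,\sigma'\rangle)/N$ up to $2i+2$ times, verify that every resulting term retains the $1/N$ factor and a bounded coefficient on $K$, and check that the resulting Lipschitz estimate has the claimed $N^{-1/2}$ scaling. Once that bookkeeping is completed, the rest of the proof assembles standard tools (chaining, Borell--TIS, and tensor polarization).
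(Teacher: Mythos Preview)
The paper does not actually prove this lemma; it is quoted verbatim as \cite[Corollary 59]{arous2020geometry} and used as a black box. So there is no in-paper argument to compare against.

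Your sketch is correct and is essentially the standard route to such uniform derivative bounds for isotropic Gaussian fields: compute the covariance of $\partial_v^i H_N$ by differentiating $\frac{1}{N}\xi(\langle\sigma,\sigma'\rangle)$, observe that every term retains the $1/N$ prefactor and has coefficients bounded on $\mathbb{B}^N\times\mathbb{S}^{N-1}$ thanks to the analyticity of $\xi$ on a neighborhood of $[-1,1]$, then feed the resulting $O(N^{-1/2})$ intrinsic metric into Dudley's entropy integral to get an $O(1)$ expected supremum, and finish with Borell--TIS using the $O(1/N)$ pointwise variance. The polarization step reducing the mixed third derivative $\partial_a\partial_b\partial_c H_N$ to $\partial_v^3 H_N$ is legitimate because $\nabla^3 H_N$ is a symmetric $3$-tensor, and the mean-value argument then gives the Hessian Lipschitz bound from the $i=3$ case. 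This is, up to presentation, the same mechanism used in the cited reference, so you have effectively reconstructed the omitted proof rather than found an alternative one.
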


We next define the class of axis-aligned subspaces $W_S$ for $S\subseteq [N]$. The key to our proof is to obtain uniform control on the Hessians $H_N(x)|_{W_S}$ over all $x\in [-1,1]^N$ and large $S$.

\begin{definition}

Given a subset $S\subseteq [N]$ we denote by $W_S$ the $|S|$ dimensional subspace spanned by elementary basis vectors $e_s$ for $s\in S$. We set $W_S(x)=W_S\cap x^{\perp}$ so that $\dim(W_S(x))\in \{|S|-1,|S|\}$.

\end{definition}

\begin{definition}

The Hamiltonian $H_N:[-1,1]^N\to\mathbb R$ is $(\varepsilon,\delta)$-good at $x\in [-1,1]^N$ if for every subset $S\subseteq [N]$ of size $|S|\geq \varepsilon N$,

\[\lambda_1\big(\nabla^2H_N(x)|_{W_S(x)}\big)\geq  2\zeta(|x|_2^2)\sqrt{\varepsilon}-\delta.\] $H_N$ is $(\varepsilon,\delta)$-good if it is $(\varepsilon,\delta)$-good at all $x\in [-1,1]^N$, and is $\delta$-good if it is $(\varepsilon,\delta)$-good for all $\varepsilon\geq \delta$.

\end{definition}

Roughly speaking, $H_N$ is $\delta$-good if its Hessian has a maximum eigenvalue of typical size or larger on all high-dimensional axis-aligned affine subspaces. We next show this condition occurs with exponentially high probability.

\begin{lemma}

\label{lem:hamiltoniangood}

Fix $\delta>0$ and $\xi(t)=\sum_{p\geq 1}\gamma_p^2 t^p$. Then $H_N(\cdot)$ is $\delta$-good with probability $1-e^{-\Omega_{\delta}(N)}$.

\end{lemma}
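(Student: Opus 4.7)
The plan is to combine Propositions~\ref{prop:GOE}--\ref{prop:LDP} with Lemma~\ref{lem:lip} and a union bound. First I would reduce the continuous range $\varepsilon \in [\delta,1]$ to finitely many values: fix a geometric grid $\delta = \varepsilon_0 < \varepsilon_1 < \cdots < \varepsilon_K = 1$ with $\varepsilon_{j+1} \leq (1+\delta/10)\varepsilon_j$ and $K = O_\delta(1)$. Any $\varepsilon \in [\delta,1]$ lies within a factor $1+\delta/10$ of some $\varepsilon_j$, so establishing the $(\varepsilon_j,\delta/2)$-good condition for each $j$ implies the $\delta$-good property.

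For each fixed $\varepsilon = \varepsilon_j$, let $\mathcal N \subseteq [-1,1]^N$ be a $\rho$-net in the normalized $|\cdot|_2$ norm with $|\mathcal N| \leq (C/\rho)^N$, where $\rho = \rho(\delta) > 0$ is chosen small in terms of the Lipschitz constant of Lemma~\ref{lem:lip} and the modulus of continuity of $\zeta$ on $[0,1]$. For each pair $(y,S)$ with $y\in \mathcal N$, $S\subseteq [N]$, and $|S|\geq \varepsilon N$, Proposition~\ref{prop:GOE} identifies $\nabla^2 H_N(y)|_{W_S(y)}$ as $\zeta(|y|_2^2)\sqrt{\dim W_S(y)/N}$ times a $GOE(\dim W_S(y))$ matrix. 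Since $\dim W_S(y) \geq \varepsilon N - 1$, Proposition~\ref{prop:LDP} applied with $k=2$ gives
\[
\lambda_2\bigl(\nabla^2 H_N(y)|_{W_S(y)}\bigr) \geq (2-\delta')\,\zeta(|y|_2^2)\sqrt{\dim W_S(y)/N}
\]
with probability $1 - e^{-\Omega_\delta(N^2)}$, for a suitable $\delta' = \delta'(\delta) > 0$. Union-bounding over the $|\mathcal N|\cdot 2^N\cdot K = e^{O_\delta(N)}$ triples $(y,S,\varepsilon_j)$ and intersecting with the Lipschitz event of Lemma~\ref{lem:lip}, all these bounds hold simultaneously with probability at least $1 - e^{-\Omega_\delta(N)}$.

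On this good event, consider arbitrary $x \in [-1,1]^N$ and $S$ with $|S| \geq \varepsilon N$. Pick $y\in \mathcal N$ with $|x-y|_2 \leq \rho$ and chain three inequalities: (i) Cauchy interlacing applied to $W_S(x) \subseteq W_S$ (codimension at most $1$) gives $\lambda_1(\nabla^2 H_N(x)|_{W_S(x)}) \geq \lambda_2(\nabla^2 H_N(x)|_{W_S})$; (ii) Weyl's inequality with Lemma~\ref{lem:lip} gives $\lambda_2(\nabla^2 H_N(x)|_{W_S}) \geq \lambda_2(\nabla^2 H_N(y)|_{W_S}) - C\rho$; (iii) interlacing again yields $\lambda_2(\nabla^2 H_N(y)|_{W_S}) \geq \lambda_2(\nabla^2 H_N(y)|_{W_S(y)})$. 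Combining these with the pointwise bound above, using continuity of $\zeta$ to replace $\zeta(|y|_2^2)$ by $\zeta(|x|_2^2)$ up to an additive error $C'\rho$, and using $\sqrt{\dim W_S(y)/N} \geq \sqrt{\varepsilon - 1/N}$, produces $\lambda_1(\nabla^2 H_N(x)|_{W_S(x)}) \geq 2\zeta(|x|_2^2)\sqrt{\varepsilon} - \delta$ provided $\rho,\delta'$ were chosen small enough.

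The main subtlety is that $W_S(x)$ depends on $x$, so Proposition~\ref{prop:GOE} cannot be applied once and for all at net points for a single fixed subspace. The three-step interlacing chain above circumvents this by passing through the fixed intermediate subspace $W_S$, at the cost of replacing $\lambda_1$ by $\lambda_2$; this loss is absorbed since Proposition~\ref{prop:LDP} gives the same quantitative bound for any fixed $k$.
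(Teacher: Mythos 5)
Your proposal is correct and follows essentially the same route as the paper's proof: fix $\varepsilon$ on a finite grid, apply Propositions~\ref{prop:GOE} and~\ref{prop:LDP} to $\lambda_2$ at net points for each fixed $S$, exploit the $e^{-\Omega(N^2)}$ rate to absorb the $2^N\cdot e^{O_\delta(N)}$ union bound, and extend to all $x$ via the Hessian Lipschitz bound of Lemma~\ref{lem:lip} together with Cauchy interlacing. The only cosmetic difference is that your interlacing chain passes through the fixed subspace $W_S$ rather than through $W_S(x)\cap y^{\perp}$ and $W_S(y)$ as in the paper; both absorb the $x$-dependence of $W_S(x)$ at the cost of one eigenvalue index, which Proposition~\ref{prop:LDP} tolerates.
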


\begin{proof}


We follow the proof of the case $S=[N]$ in \cite[Lemma 3]{subag2018following}, extending the union bound to be over subsets $S$ as well as points $x$. First, it suffices to show $H_N$ is $(\varepsilon,\delta/2)$-good with the claimed probability for all fixed $(\varepsilon,\delta)$ since one can then union bound over $O_{\delta}(1)$ values of $\varepsilon$ using uniform continuity of $\zeta$. Replacing $\delta/2$ by $\delta$, we will show that $H_N$ is $(\varepsilon,\delta)$-good with probability $1-e^{-\Omega_{\varepsilon,\delta}(N)}$ which implies the conclusion.

For any fixed $x\in [-1,1]^N$ and $S\subseteq [N]$, because $W_S(x)\subseteq x^{\perp}$, we obtain from Proposition~\ref{prop:GOE} that the restricted Hessian $\nabla^2 H_N(x)|_{W_S(x)}$ has the law of $\zeta(|x|_2^2)\sqrt{\frac{\dim W_S(x)}{N}}\cdot GOE(\dim W_S(x))$. As $\dim W_S(x)\geq |S|-1$, Proposition~\ref{prop:LDP} implies:

\begin{equation}
\label{eq:eigenlowerbound}
    \mathbb P\left[\lambda_2(\nabla^2H_N(x)|_{W_S(x)})\geq 2\zeta(|x|_2^2)\sqrt{\frac{|S|-1}{N}}-\frac{\delta}{2}\right]\geq 1-e^{-\Omega_{\delta}(|S|^2)}.
\end{equation} Restricting to $|S|\geq \varepsilon N$ and observing there are at most $2^N$ possibilities for $S$, we conclude that for any fixed $x\in [-1,1]^N$,

\begin{equation}\label{eq:eigenlowerboundv2}\mathbb P\left[\forall S\subseteq [N],|S|\geq \varepsilon N:  \lambda_2(\nabla^2H_N(x)|_{W_S(x)})\geq 2\zeta(|x|_2^2)\sqrt{\frac{|S|-1}{N}}-\frac{\delta}{2} \right]\geq 1-e^{-\Omega_{\varepsilon,\delta}(N^2)}.\end{equation}




Next choose a $\delta'$-net $\mathcal N_{\delta'}$ for $[-1,1]^N$ of size $|\mathcal N_{\delta'}|=e^{O_{\delta'}(N)}$. Union bounding over $y\in \mathcal N_{\delta'}$, it follows that \eqref{eq:eigenlowerboundv2} holds for all $y\in\mathcal N_{\delta'}$ simultaneously with the same high probability $1-e^{-\Omega_{\varepsilon,\delta}(N^2)}$. Assume additionally that the conclusions of Lemma~\ref{lem:lip} hold, which is with probability $1-e^{-\Omega(N)}$. Under these conditions we now show that Equation~\eqref{eq:eigenlowerboundv2} holds simultaneously for all $x\in [-1,1]^N$ and $|S|\geq \varepsilon N$. For such an $x$ choose $y=y(x)\in\mathcal N_{\delta'}$ with $|x-y|_{2}\leq \delta'$. Difference of squares and the triangle inequality imply

\[|x|_2^2-|y|_2^2 \leq 2|x-y|_2 \leq 2\delta'.\]

From Lemma~\ref{lem:lip} and the fact (which follows from the Courant-Fisher characterization) that $|\lambda_k(M)-\lambda_k(M')|\leq |M-M'|_{op}$ for any symmetric matrices $M,M'$ and any integer $k$, 

\begin{align*}\left|\lambda_2\left(\nabla^2H_N(x)|_{W_S(y)}\right)-\lambda_2\left(\nabla^2H_N(y)|_{W_S(y)}\right)\right|&\leq \left|\left(\nabla^2H_N(x)-\nabla^2H_N(y)\right)|_{W_S(y)}\right|_{op} \\
&\leq \left|\nabla^2H_N(x)-\nabla^2H_N(y)\right|_{op} \\
&\leq 2C\delta'.
\end{align*}

From here we derive the eigenvalue lower bound

\begin{align*}
    \lambda_1\left(\nabla^2 H_N(x)|_{W_S(x)}\right)&\geq \lambda_1\left(\nabla^2 H_N(x)|_{W_S(x)\cap y^{\perp}}\right)\\
    &\geq \lambda_2\left(\nabla^2 H_N(x)|_{W_S(y)}\right)\\
    &\geq \lambda_2\left(\nabla^2 H_N(y)|_{W_S(y)}\right)-2C\delta'\\
    &\geq 2\zeta(|y|_2^2)\sqrt{\varepsilon}-\frac{\delta}{2}-2C\delta'\\
    &\geq 2\zeta(\max(0,|x|_2^2-2\delta'))\sqrt{\varepsilon}-\frac{\delta}{2}-2C\delta'.
\end{align*}

As $\zeta$ is uniformly continuous on $[0,1]$, taking $\delta'$ sufficiently small gives the conclusion

\[\lambda_1\left(\nabla^2 H_N(x)|_{W_S(x)}\right)\geq 2\zeta(|x|_2^2)\sqrt{\varepsilon}-\delta.\]

Because $x\in [-1,1]^N$ and $|S|\geq\varepsilon N$ were arbitrary, we conclude that $H_N$ is $(\varepsilon,\delta)$-good with probability $1-e^{-\Omega(N)}-e^{-\Omega_{\varepsilon,\delta}(N^2)}=1-e^{-\Omega_{\varepsilon,\delta}(N)}$. Recalling the discussion at the beginning of the proof, it follows that $H_N$ is $\delta$-good with probability $1-e^{-\Omega_{\delta}(N)}$ as claimed.

\end{proof}

The next lemma shows how to use Lemma~\ref{lem:hamiltoniangood} to obtain local improvements to $H_N(\cdot)$ from any point $x\in [-1,1]^N$ which is far from a corner. 

\begin{lemma}

\label{lem:increment}

Suppose the Hamiltonian $H_N$ is $\delta$-good and satisfies the guarantee of Lemma~\ref{lem:lip}. Then for any $x\in [-1,1]^N$ with $|x|_2^2\leq 1-\delta$ there is a non-zero vector $v$ orthogonal to $x$ such that:
\begin{enumerate}
    \item $x+v\in [-1,1]^N$
    \item If $|x_i|=1$ then $v_i=0$.
    \item \[H_N(x+v)-H_N(x)\geq \left(\zeta(|x|_2^2)\sqrt{1-|x|_2^2}-\delta\right)|v|_2^2.\]
    \item $|v|_2\leq \frac{\delta}{10C}.$
    \item Either $|v|_2=\frac{\delta}{10C}$ or $x+v$ has strictly more $\pm 1$-valued coordinates than $x$.
\end{enumerate} 
\end{lemma}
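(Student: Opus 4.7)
My plan is to locate a top eigendirection of a suitably restricted Hessian at $x$, and move a controlled distance along it to convert the spectral lower bound from the $\delta$-good hypothesis into a genuine quadratic increment of $H_N$, using the Lipschitz Hessian bound of Lemma~\ref{lem:lip} to handle the Taylor remainder.

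First I would let $S=\{i\in[N]:|x_i|<1\}$ be the set of coordinates of $x$ that are strictly interior. Since $1-|x|_2^2=\frac{1}{N}\sum_{i\in S}(1-x_i^2)\leq |S|/N$, the hypothesis $|x|_2^2\leq 1-\delta$ gives $|S|/N\geq 1-|x|_2^2\geq\delta$. I would then apply the $\delta$-goodness of $H_N$ with this particular $S$ and parameter $\varepsilon=|S|/N\geq\delta$, producing a unit top eigenvector $u\in W_S(x)$ (so $|u|_2=1$, $u\perp x$, and $u_i=0$ whenever $|x_i|=1$) satisfying
\[u^\top\nabla^2H_N(x)u\;\geq\;2\zeta(|x|_2^2)\sqrt{|S|/N}-\delta\;\geq\;2\zeta(|x|_2^2)\sqrt{1-|x|_2^2}-\delta.\]
Next I would pick the sign $\epsilon\in\{\pm 1\}$ for which $\epsilon\nabla H_N(x)\cdot u\geq 0$, and set $v=t\epsilon u$ where $t>0$ is taken as large as possible subject to the two constraints $x+v\in[-1,1]^N$ and $|v|_2\leq \delta/(10C)$. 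A strictly positive such $t$ exists because $u$ is supported on $S$ and $x_i\in(-1,1)$ for every $i\in S$. Properties $1$, $2$, and $4$ follow immediately from this construction, and property $5$ follows from the stopping rule: the process halts either when $|v|_2$ reaches the ceiling $\delta/(10C)$ or when some new coordinate of $x+v$ first hits $\pm 1$, adding a fresh saturated coordinate.

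For property $3$, I would invoke Taylor's theorem at $x$ in direction $v$, using the Lipschitz Hessian bound from Lemma~\ref{lem:lip} to control the remainder:
\[H_N(x+v)-H_N(x)\;\geq\;\nabla H_N(x)\cdot v\;+\;\tfrac{1}{2}v^\top\nabla^2H_N(x)v\;-\;O\!\left(C|v|_2^3\right).\]
The linear term is nonnegative by the sign choice; the quadratic term equals $\tfrac{|v|_2^2}{2}u^\top\nabla^2H_N(x)u\geq\left(\zeta(|x|_2^2)\sqrt{1-|x|_2^2}-\delta/2\right)|v|_2^2$; and the cubic remainder is bounded by a small absolute multiple of $C|v|_2^3\leq(\delta/10)|v|_2^2$, where property $4$ was tuned with exactly the constant $10$ so that this error fits inside the $\delta/2$ slack above. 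Combining these estimates yields property $3$. I do not anticipate serious difficulty here; the only mild subtlety is the joint feasibility of the box constraint $[-1,1]^N$ and the orthogonality constraint $v\perp x$, which is what makes restricting to $W_S(x)$ essential: vectors there automatically leave all saturated coordinates untouched while remaining orthogonal to $x$, so one really can move in direction $u$ without ever leaving the cube.
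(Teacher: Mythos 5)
Your proposal is correct and follows essentially the same route as the paper: pick the set $S$ of unsaturated coordinates (whose density is at least $1-|x|_2^2\geq\delta$ by the same Markov-type bound), take the top eigenvector of $\nabla^2 H_N(x)|_{W_S(x)}$ supplied by $\delta$-goodness, orient it so the gradient term is nonnegative, scale it maximally subject to the box and $\delta/(10C)$ constraints, and control the Taylor remainder via the Hessian Lipschitz bound of Lemma~\ref{lem:lip}. The only differences are cosmetic (you spell out the Taylor expansion and the constant bookkeeping slightly more explicitly than the paper does).
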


\begin{proof}

By a simple Markov inequality we know that $x$ has a set $S$ of at least $(1-|x|_2^2) N$ coordinates not equal to $\pm 1$. Because $H_N$ is $\delta$-good the restriction $\nabla^2H_N(x)|_{W_S(x)}$ has an eigenvalue at least $2\zeta(|x|_2^2)\sqrt{1-|x|_2^2}-\delta $ with corresponding eigenvector $v$. Since $v\in W_S\subseteq\mathbb R^N$, we may by slight abuse of notation treat $v$ as a vector in $\mathbb R^N$. Of course this $v\in\mathbb R^N$ need not be an eigenvector of $\nabla^2 H_N(x)$ but we retain the Rayleigh quotient lower bound 

\[\langle v,\nabla^2 H_N(x)v\rangle\geq \left(2\zeta(|x|_2^2)\sqrt{1-|x|_2^2}-\delta\right) |v|_2^2.\] Since $v,-v$ play symmetric roles we may assume by symmetry that $\langle\nabla H_N(x),v\rangle\geq 0$. By scaling $v$ to be sufficiently small we may assume that $x+v\in [-1,1]^N$ and that $|v|_2\leq \frac{\delta}{10C}.$ Using the guarantee of Lemma~\ref{lem:lip} with $i=3$, it follows that along the line segment $x+[0,1] v$ the Hessian of $H_N$ varies in operator norm by at most $\frac{\delta}{5}$. This combined with $\langle\nabla H_N(x),v\rangle\geq 0$ easily implies that 

\[H_N(x+v)\geq H_N(x)+\left(\zeta(|x|_2^2)\sqrt{1-|x|_2^2}-\delta\right)|v|_2^2. \]

Hence $v$ satisfies the first 4 claimed conditions. By scaling $v$ to be as long as possible given the constraints $x+v\in [-1,1]^N$ and $|v|_2\leq \frac{\delta}{10C}$ we ensure that item 5 is satisfied.

\end{proof}

\begin{proof}[Proof of Theorem~\ref{thm:main}]

We take $\delta$ small depending on $\varepsilon$ and assume $H_N$ is $\delta$-good and that the conclusion of Lemma~\ref{lem:lip} holds. For any point $x^0\in [-1,1]^N$ with $|x^0|_2^2\leq 1-\varepsilon$ we choose $v^0$ as guaranteed by Lemma~\ref{lem:increment} and set $x^1=x^0+v^0$. We continue producing iterates $x^{i+1}=x^i+v^i$ via Lemma~\ref{lem:increment} with increasing energies until we reach an $x^m$ with $|x^m|_2^2\geq 1-\delta$. By part 5 of Lemma~\ref{lem:increment}, this occurs for some finite $m$.

Since $v^i$ is orthogonal to $x^i$, we find

\begin{align}H_N(x^m)-H_N(x^0)&= \sum_{i<m} H_N(x^{i+1})-H_N(x^i)\\
&\geq \sum_{i<m} (\zeta(|x^i|_2^2)\sqrt{1-|x^i|_2^2}-\delta)|v^i|_2^2\\
&= \sum_{i<m} (\zeta(|x^i|_2^2)\sqrt{1-|x^i|_2^2}-\delta)(|x^{i+1}|_2^2-|x^i|_2^2). \end{align}

Up to the error $\sum_{i<m}\delta (|x^{i+1}|_2^2-|x^i|_2^2)\leq \delta$, this is exactly a Riemann sum for the integral $\int_{|x^0|_2^2}^{|x^m|_2^2} \zeta(t)\sqrt{1-t}dt$. Because $|v^i|_2^2\to 0$ as $\delta\to 0$ uniformly in $i$, and $|x^0|_2^2\leq 1-\varepsilon, |x^m|_2^2\geq 1-\delta$, these Riemann sums have limit infimum at least the integral $\int_{1-\varepsilon}^{1} \zeta(t)\sqrt{1-t}dt$. Hence for fixed $\varepsilon$, and $\delta\to 0$, we obtain \[H_N(x^m)-H_N(x^0)\geq \int_{1-\varepsilon}^1 \zeta(t)\sqrt{1-t}dt -o_{\delta\to 0}(1).\] Here $o_{\delta\to 0}(1)$ indicates a term tending to $0$ as $\delta\to 0$, uniformly in $N$. Since $x^0$ was arbitrary given the constraint $|x^0|_2^2\leq 1-\varepsilon$ and $H_N(x^m)\leq \max_{y\in [-1,1]^N}H_N(y)$, taking $\delta$ small enough depending on $(\varepsilon,\eta)$ concludes the proof.

\end{proof}

\section{Extension to General Polytopes}

\label{sec:gen}

Theorem~\ref{thm:main} extends to more general polytopes than cubes. In particular we show that for bounded polytopes with $e^{o(N^2)}$ total faces, all near-maxima of $H_N$ over the polytope occur near a point at which $(1-\varepsilon)N$ faces are incident. We remark that the condition of $e^{o(N^2)}$ total faces is implied by having either $e^{o(N)}$ vertices or $e^{o(N)}$ maximal (i.e. codimension $1$) faces. It also holds for any product of $O(N)$ polytopes of constant dimension.

\begin{definition}

A sequence of polytopes $\mathcal P_N\subseteq \mathbb R^N$ is said to be \emph{regular} if:

\begin{enumerate}
    \item $\mathcal P_N$ has at most $e^{o(N^2)}$ faces of all dimensions.
    \item $\mathcal P_N\subseteq\mathbb B_N$.
    
\end{enumerate}

\end{definition}

The second condition ensures that Lemma~\ref{lem:lip} continues to hold over $\mathcal P_N$. We again remind the reader that we use the normalization $|x|_2^2=\frac{\sum_{i=1}^N x_i^2}{N}$

\begin{definition}

Given a polytope $\mathcal P_N\subseteq \mathbb R^N$ an $\varepsilon$-corner is a point on the boundary $\partial \mathcal P_N$ at which at least $(1-\varepsilon)N$ faces meet.

\end{definition}

\begin{theorem}
\label{thm:main2}
Let $\xi$ define a mixed $p$-spin model and fix $\varepsilon,\varepsilon',\eta>0$. Let $\mathcal P_N\subseteq \mathbb R^N$ be a regular sequence of polytopes. Then for $N$ sufficiently large, with probability $1-e^{-\Omega_{\varepsilon,\varepsilon',\eta}(N)}$, for any $x\in \mathcal P_N$ satisfying

\[H_N(x)\geq \max_{y\in \mathcal P_N}H_N(y)-\sqrt{\varepsilon}\int_{|x|_2^2}^{|x|_2^2+\varepsilon'} \zeta(t) dt+\eta\] there exists an $\varepsilon$-corner $\hat x$ of $\mathcal P_N$ with $|x-\hat x|_2^2\leq \varepsilon'.$ 

\end{theorem}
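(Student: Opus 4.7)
I would adapt the proof of Theorem~\ref{thm:main} by replacing the axis-aligned subspaces $W_S$ with tangent subspaces to faces of $\mathcal{P}_N$. For each face $F$ of $\mathcal{P}_N$ let $V_F\subseteq\mathbb{R}^N$ denote the linear subspace parallel to the affine hull of $F$, and let $F(x)$ denote the smallest face of $\mathcal{P}_N$ containing $x$. A key combinatorial preliminary is that any $x$ which is not an $\varepsilon$-corner satisfies $\dim V_{F(x)}\geq \varepsilon N$: the faces of $\mathcal{P}_N$ containing $x$ are exactly the super-faces of $F(x)$, whose poset is the face lattice of a polytope of dimension approximately $N-\dim F(x)$. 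Since every $D$-polytope has at least $2^{D+1}$ faces, when $\dim F(x)<\varepsilon N$ there are at least $2^{(1-\varepsilon)N}\gg(1-\varepsilon)N$ faces incident at $x$, so $x$ is an $\varepsilon$-corner.

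The first step is to generalize Lemma~\ref{lem:hamiltoniangood}: say $H_N$ is $(\varepsilon,\delta)$-good if for every $x\in\mathcal{P}_N$ and every face $F$ of $\mathcal{P}_N$ with $\dim V_F\geq \varepsilon N$,
\[\lambda_2\bigl(\nabla^2 H_N(x)|_{V_F\cap x^{\perp}}\bigr)\geq 2\zeta(|x|_2^2)\sqrt{\varepsilon}-\delta.\]
Propositions~\ref{prop:GOE} and~\ref{prop:LDP} (applied with $k=2$) give this for any fixed $(x,F)$ with probability $1-e^{-\Omega_{\varepsilon,\delta}(N^2)}$. Union-bounding over the $e^{o(N^2)}$ faces---the only place where regularity of $\mathcal{P}_N$ enters---and an $e^{O(N)}$-sized net in $\mathcal{P}_N$, then extending via Lemma~\ref{lem:lip}, yields $(\varepsilon,\delta)$-goodness with probability $1-e^{-\Omega_{\varepsilon,\delta}(N)}$.

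To prove the theorem, assume for contradiction that $x^0\in\mathcal{P}_N$ satisfies the energy hypothesis but $|x^0-\hat x|_2^2>\varepsilon'$ for every $\varepsilon$-corner $\hat x$. Build iterates $x^{i+1}=x^i+v^i$ by setting $F^i=F(x^i)$ (whose dimension satisfies $\dim V_{F^i}\geq \varepsilon N$ inductively, since no $x^i$ will turn out to be an $\varepsilon$-corner), picking $v^i$ in the top two-dimensional eigenspace of $\nabla^2 H_N(x^i)|_{V_{F^i}\cap(x^i)^{\perp}}$, and scaling so that $|v^i|_2\leq \delta/(10C)$ and $x^{i+1}\in F^i$, with $|v^i|_2$ maximal subject to these constraints. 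Within the two-dimensional eigenspace, pick the direction of $v^i$ so that \emph{both} $\langle v^i,\nabla H_N(x^i)\rangle\geq 0$ and $\langle v^i,x^0\rangle\geq 0$: two linear half-space constraints on a plane always have nonempty common intersection. Iterate until $\sum_j|v^j|_2^2\geq \varepsilon'$.

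The Riemann sum argument of Theorem~\ref{thm:main}, combined with $v^i\perp x^i$ and hence $|x^m|_2^2-|x^0|_2^2=\sum|v^i|_2^2\approx \varepsilon'$, gives
\[H_N(x^m)-H_N(x^0)\geq \sqrt{\varepsilon}\int_{|x^0|_2^2}^{|x^0|_2^2+\varepsilon'}\zeta(t)\,dt - o_{\delta\to 0}(1).\]
Simultaneously, expanding $|x^i-x^0|_2^2=|\sum_{j<i}v^j|_2^2$ and using $\langle v^j,x^0\rangle=-\sum_{j'<j}\langle v^{j'},v^j\rangle$ (which follows from $v^j\perp x^j$) yields the identity $|x^i-x^0|_2^2=\sum_{j<i}|v^j|_2^2-2\sum_{j<i}\langle v^j,x^0\rangle$, which by the sign choice is at most $\sum_{j<i}|v^j|_2^2\leq \varepsilon'$ for every $i\leq m$. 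Hence no $x^i$ is an $\varepsilon$-corner, justifying the inductive use of the combinatorial bound and showing the iteration actually terminates at $\sum|v^i|_2^2\approx \varepsilon'$. This contradicts $H_N(x^m)\leq \max_{y\in\mathcal{P}_N}H_N(y)$ once $\delta$ is small enough given $(\varepsilon,\varepsilon',\eta)$. The main obstacle is the simultaneous satisfaction of two linear sign constraints on each $v^i$---forcing a two-dimensional top eigenspace into the definition of goodness---together with the combinatorial step, absent from Theorem~\ref{thm:main}, that relates ``not an $\varepsilon$-corner'' to a large-dimensional subspace in which one can move.
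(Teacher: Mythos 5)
Your proposal is correct and follows essentially the same route as the paper: face-aligned subspaces replacing the $W_S$, a uniform $\lambda_2$ lower bound obtained by union-bounding over the $e^{o(N^2)}$ faces and a net of points (the paper's ``$(\varepsilon,\delta)$-superb'' condition), and an iteration that tracks both $|x^i|_2^2$ and $|x^i-x^0|_2^2$ via a Riemann sum. The only real divergence is how you handle the constraint relative to $x^0$: the paper restricts the Hessian to $U\cap x^{\perp}\cap x^{0\perp}$ and invokes Cauchy interlacing so that $\langle v^i,x^0\rangle=0$ exactly and the distance-from-$x^0$ recursion is an identity, whereas you work in the two-dimensional top eigenspace and impose only the sign constraint $\langle v^i,x^0\rangle\geq 0$, yielding the one-sided bound $|x^i-x^0|_2^2\leq\sum_{j<i}|v^j|_2^2$ --- which is the direction needed, so your variant closes as well.
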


Note that because $\zeta$ is increasing, we have $\int_{|x|_2^2}^{|x|_2^2+\varepsilon'}\zeta(t)dt\geq \int_0^{\varepsilon'}\zeta(t)dt$ which is positive and independent of $x$. The proof is almost the same as the cubical case. The subspaces $W_S$ are replaced by the family of all $\mathcal P_N$-face-aligned subspaces in $\mathbb R^N$. The main difference is that to prove Theorem~\ref{thm:main2} it does not suffice to track the distance $|x^i|_2$ to the origin, as being a near-corner is no longer equivalent to having near-maximal distance from the origin. Because of this we additionally track the distances $|x^i-x^0|_2$ of our sequence $x^0,x^1,x^2,\dots$ from the starting point $x^0$. This leads to an additional linear constraint on the increment vectors $v^i$ and hence requires one more large eigenvalue of the restricted Hessians. 

\begin{definition}

We say a subspace $U\subseteq\mathbb R^N$ is $\mathcal P_N$-\emph{face-aligned} if $\mathcal P_N$ has a face whose tangent space is exactly (a translate of) $U$. 

\end{definition}

\begin{definition}

For a mixture $\xi$ and polytope $\mathcal P_N\subseteq\mathbb R^N$, the Hamiltonian $H_N$ is $(\varepsilon,\delta)$-superb if for all $x\in\mathcal P_N$ and all $\mathcal P_N$-face-aligned subspaces $U$ with $\dim(U)\geq \varepsilon N$,

\[\lambda_2(\nabla^2H_N(x)|_{U\cap x^{\perp}})\geq  2\zeta(|x|_2^2)\sqrt{\varepsilon}-\delta.\]

\end{definition}

\begin{lemma}
\label{lem:hamstillgood}

Fix $\varepsilon,\delta>0$ a mixture $\xi$, and a regular sequence $\mathcal P_N$ of polytopes. Then with probability $1-e^{-\Omega_{\varepsilon,\delta}(N)}$ the random function $H_N$ is $(\varepsilon,\delta)$-superb.

\end{lemma}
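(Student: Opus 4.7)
The plan is to follow the proof of Lemma~\ref{lem:hamiltoniangood} almost verbatim, with two adjustments: the union bound now ranges over $\mathcal P_N$-face-aligned subspaces $U$ in place of coordinate subspaces $W_S$, and the definition of superb concerns $\lambda_2$ rather than $\lambda_1$ of the restricted Hessian.

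First I would fix a single point $y\in\mathcal P_N$ and a face-aligned subspace $U$ with $\dim(U)\ge \varepsilon N$. Since $U\cap y^\perp \subseteq y^\perp$ has dimension at least $\varepsilon N-1$, Proposition~\ref{prop:GOE} identifies the law of $\nabla^2 H_N(y)|_{U\cap y^\perp}$ as $\zeta(|y|_2^2)\sqrt{\dim(U\cap y^\perp)/N}$ times a $GOE(\dim(U\cap y^\perp))$ matrix. Proposition~\ref{prop:LDP} with $k=3$ then gives
\[
\mathbb{P}\bigl[\lambda_3\bigl(\nabla^2 H_N(y)|_{U\cap y^\perp}\bigr)\ge 2\zeta(|y|_2^2)\sqrt{\varepsilon}-\delta/2\bigr]\ge 1-e^{-\Omega_{\varepsilon,\delta}(N^2)}.
\]
I invoke $\lambda_3$ rather than $\lambda_2$ to reserve a codimension-one buffer for the final Lipschitz comparison, by direct analogy with the $\lambda_1$-vs-$\lambda_2$ device in Lemma~\ref{lem:hamiltoniangood}. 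Next I would union bound this event over a $\delta'$-net $\mathcal N_{\delta'}$ of $\mathcal P_N\subseteq\mathbb B_N$ of size $e^{O_{\delta'}(N)}$ and over all face-aligned $U$, of which there are at most $e^{o(N^2)}$ by regularity. Because both counts are $e^{o(N^2)}$ while the per-event failure probability is $e^{-\Omega_{\varepsilon,\delta}(N^2)}$, the union bound succeeds with total failure probability $e^{-\Omega_{\varepsilon,\delta}(N^2)}$.

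To pass from $\mathcal N_{\delta'}$ to arbitrary $x\in\mathcal P_N$, I would additionally assume the Lipschitz conclusion of Lemma~\ref{lem:lip} (which costs an extra $e^{-\Omega(N)}$ in probability) and, given $x$, choose $y\in\mathcal N_{\delta'}$ with $|x-y|_2\le\delta'$. Using subspace monotonicity, Cauchy interlacing on the codimension-one inclusion $U\cap x^\perp\cap y^\perp\subseteq U\cap y^\perp$, and the operator-norm Lipschitz bound from Lemma~\ref{lem:lip}, I obtain the chain
\begin{align*}
\lambda_2\bigl(\nabla^2 H_N(x)|_{U\cap x^\perp}\bigr)
&\ge \lambda_2\bigl(\nabla^2 H_N(x)|_{U\cap x^\perp\cap y^\perp}\bigr)\\
&\ge \lambda_3\bigl(\nabla^2 H_N(x)|_{U\cap y^\perp}\bigr)\\
&\ge \lambda_3\bigl(\nabla^2 H_N(y)|_{U\cap y^\perp}\bigr)-2C\delta'\\
&\ge 2\zeta(|y|_2^2)\sqrt{\varepsilon}-\delta/2-2C\delta'.
\end{align*}
Since $\bigl||x|_2^2-|y|_2^2\bigr|\le 2\delta'$ and $\zeta$ is uniformly continuous on $[0,1]$, taking $\delta'$ sufficiently small in terms of $(\varepsilon,\delta,C)$ upgrades the right-hand side to $2\zeta(|x|_2^2)\sqrt{\varepsilon}-\delta$, which is precisely $(\varepsilon,\delta)$-superbness.

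The only real obstacle is the accounting in the union bound: the $e^{-\Omega_{\varepsilon,\delta}(N^2)}$ large-deviation tail from Proposition~\ref{prop:LDP} must dominate the count $e^{o(N^2)}$ of face-aligned subspaces, and the regularity hypothesis on $\mathcal P_N$ is calibrated precisely so that it does. The transition from $\lambda_1$ to $\lambda_2$ in the definition of superb costs only one extra rank in Proposition~\ref{prop:LDP} and causes no further difficulty; it is anticipated by the eventual need (in the polytope analog of Lemma~\ref{lem:increment}) to impose an additional linear constraint on each increment vector $v^i$ so as to control the distance $|x^i-x^0|_2$.
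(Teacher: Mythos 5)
Your proposal is correct and follows essentially the same route as the paper: the paper's proof likewise shifts every $\lambda_k$ in the argument of Lemma~\ref{lem:hamiltoniangood} to $\lambda_{k+1}$ (so $\lambda_3$ on the net, $\lambda_2$ at arbitrary points), union bounds over the $e^{o(N^2)}$ pairs of net points and face-aligned subspaces against the $e^{-\Omega_{\varepsilon,\delta}(N^2)}$ tail of Proposition~\ref{prop:LDP}, and then uses Lemma~\ref{lem:lip} with the interlacing chain to pass from the net to all of $\mathcal P_N$. Your version simply spells out the details the paper leaves implicit.
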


\begin{proof}

The proof is almost identical to that of Lemma~\ref{lem:hamiltoniangood} - note that $U\cap x^{\perp}$ is exactly the same as $W_S(x)$ for the case of the cube. To obtain a lower bound on $\lambda_2$ rather than $\lambda_1$, we simply change all instances of $\lambda_k$ to $\lambda_{k+1}$ in the proof of Lemma~\ref{lem:hamiltoniangood}. Regularity of $\mathcal P_N$ ensures that when we take a union bound over pairs $(y,U)$ for $y$ in a $\delta'$-net $\mathcal N_{\delta'}\subseteq\mathcal P_N$ and all $\mathcal P_N$-face-aligned subspaces $U$, we only consider $e^{o(N^2)}$ distinct pairs. Hence the $N^2$ large deviation rate of Proposition~\ref{prop:LDP} ensures uniform eigenvalue lower bounds across all such pairs with exponentially high probability. As remarked previously, Lemma~\ref{lem:lip} continues to apply to $\mathcal P_N$, so that by again taking $\delta'$ small we extend from a $\delta'$ net to all of $\mathcal P_N$ just as in Lemma~\ref{lem:hamiltoniangood}. 

\end{proof}

We next give the analog of Lemma~\ref{lem:increment} with the new constraint $\langle v,x^0\rangle=0$.

\begin{lemma}

\label{lem:incrementv2}

Suppose the Hamiltonian $H_N$ is $(\varepsilon,\delta)$-superb and satisfies the guarantee of Lemma~\ref{lem:lip}. Then for any $x\in \mathcal P_N$ not a $\varepsilon$-corner and for any $x^0\in\mathcal P_N$ there is a non-zero vector $v$ orthogonal to both $x$ and $x^0$ such that:
\begin{enumerate}
    \item $x+v\in \mathcal P_N$
    \item If $x$ is contained in a boundary face of $\mathcal P_N$, then $x+v$ is in the same face.
    \item \[H_N(x+v)-H_N(x)\geq \left(\zeta(|x|_2^2)\sqrt{\varepsilon}-\delta\right)|v|_2^2.\]
    \item $|v|_2\leq \frac{\delta}{10C}.$
    \item Either $|v|_2=\frac{\delta}{10C}$ or $x+v$ is contained in a face of dimension strictly smaller than that of any face containing $x$.
\end{enumerate} 
\end{lemma}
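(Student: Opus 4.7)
My approach is to follow the proof of Lemma~\ref{lem:increment} essentially verbatim, with two modifications. The axis-aligned subspace $W_S(x)$ used in the cube case gets replaced by the tangent space of the smallest face of $\mathcal P_N$ whose relative interior contains $x$, and the extra orthogonality constraint $v \perp x^0$ is absorbed via Cauchy's interlacing theorem. This is precisely why the $(\varepsilon,\delta)$-superb condition controls $\lambda_2$ rather than $\lambda_1$.

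First, I would let $F$ denote the smallest face of $\mathcal P_N$ whose relative interior contains $x$, and let $U$ be the linear subspace parallel to $F$. Since $F$ is a face of $\mathcal P_N$, the space $U$ is $\mathcal P_N$-face-aligned. The one genuinely polytope-theoretic step is to check that $\dim U \geq \varepsilon N$: by the standard combinatorial fact that a face of codimension $c$ in a polytope is the intersection of at least $c$ facets, a point in the relative interior of a face of dimension less than $\varepsilon N$ would lie on more than $(1-\varepsilon)N$ facets and hence be an $\varepsilon$-corner, contradicting the hypothesis.

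Once $\dim U \geq \varepsilon N$ is in hand, applying the $(\varepsilon,\delta)$-superb hypothesis gives $\lambda_2(\nabla^2 H_N(x)|_{U \cap x^\perp}) \geq 2\zeta(|x|_2^2)\sqrt{\varepsilon} - \delta$. Setting $V := U \cap x^\perp \cap (x^0)^\perp$, which has codimension at most one inside $U \cap x^\perp$, Cauchy interlacing yields $\lambda_1(\nabla^2 H_N(x)|_V) \geq \lambda_2(\nabla^2 H_N(x)|_{U \cap x^\perp})$. I would pick a corresponding unit eigenvector $v_0 \in V$, which is automatically orthogonal to both $x$ and $x^0$ and tangent to $F$, and flip its sign if necessary to ensure $\langle \nabla H_N(x), v_0 \rangle \geq 0$.

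Finally, I would scale $v = t v_0$ with $t > 0$ as large as possible subject to $|v|_2 \leq \delta/(10C)$ and $x + v \in \mathcal P_N$. Because $v_0$ is tangent to $F$ at $x$ and $x$ lies in the relative interior of $F$, small positive steps remain inside $F$; the maximal admissible $t$ either saturates the norm bound or first pushes $x + v$ onto a strictly lower-dimensional boundary face of $F$, giving item~5. Any boundary face of $\mathcal P_N$ containing $x$ must contain $F$, so $x + v \in F$ automatically belongs to the same face as $x$ required in item~2. The energy estimate then follows exactly as in Lemma~\ref{lem:increment} from a second-order Taylor expansion, using the nonnegative first-order term, the Rayleigh bound on the quadratic term, and Lemma~\ref{lem:lip} to control the variation of the Hessian along the segment $x + [0,1]v$ (after absorbing an $O(\delta)$ lower-order error into a constant adjustment of $\delta$). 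Beyond the polytope-combinatorial step, there is no essentially new content compared to Lemma~\ref{lem:increment}.
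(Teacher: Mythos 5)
Your proposal is correct and matches the paper's argument: the paper likewise takes $U$ to be the tangent space of the minimal face containing $x$, invokes the $(\varepsilon,\delta)$-superb bound on $\lambda_2(\nabla^2 H_N(x)|_{U\cap x^\perp})$, passes to $U\cap x^\perp\cap (x^0)^\perp$ via Cauchy interlacing, and finishes exactly as in Lemma~\ref{lem:increment}. Your explicit justification that $\dim U\geq \varepsilon N$ (via the facet-counting fact) is a detail the paper states without proof, so you have if anything been slightly more careful.
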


\begin{proof}

Let $U$ be the subspace corresponding to the minimal face containing $x$. As $x$ is not an $\varepsilon$-corner we know that $|U|\geq\varepsilon N.$ By Cauchy interlacing, 

\begin{align}\lambda_1\left(\nabla^2 H_N(x)|_{U\cap x^{\perp}\cap x^{0\perp}}\right)&\geq \lambda_2\left(\nabla^2 H_N(x)|_{U\cap x^{\perp}}\right)\\
&\geq 2\zeta(|x|_2^2)\sqrt{\varepsilon}-\delta.\end{align}

Let $v\in U\cap x^{\perp}\cap x^{0 \perp}$ be the top eigenvector of $\nabla^2 H_N(x)$. As before since $U\cap x^{\perp}\cap x^{0 \perp}\subseteq\mathbb R^N$ we may treat $v$ as a vector in $\mathbb R^N$. The remainder of the proof is identical to that of Lemma~\ref{lem:increment}.

\end{proof}

\begin{proof}[Proof of Theorem~\ref{thm:main2}]

We begin with a point $x^0$ and repeatedly us Lemma~\ref{lem:incrementv2}, whose assumptions hold with probability $1-e^{-\Omega_{\varepsilon,\delta}(N)}$. We obtain a sequence $x^0,x^1=x^0+v^i,x^2=x^1+v^1,\dots$ of points in our polytope. We continue until reaching an $\varepsilon$-corner $x^m$. We have for each $i$:

\[H_N(x^{i+1})-H_N(x^i)\geq (\zeta(|x^i|_2^2)\sqrt{\varepsilon}-\delta)|v^i|_2^2. \]

From the orthogonality conditions on $v^i$ we have

\begin{equation}\label{eq:ortho}|x^{i+1}|_2^2-|x^i|_2^2=|v^i|_2^2=|x^{i+1}-x^0|_2^2-|x^{i}-x^0|_2^2\end{equation}
which implies $\delta\sum_{i=0}^{m-1} |v^i|_2^2=O(\delta)=o_{\delta\to 0}(1).$  As in the proof of Theorem~\ref{thm:main}, the fact that $|v^i|_2^2\to 0$ uniformly as $\delta\to 0$ gives the Riemann sum convergence

\[\sum_{i=0}^{m-1} \zeta(|x^i|_2^2)\sqrt{\varepsilon}|v^i|_2^2=\sum_{i=0}^{m-1} \zeta(|x^i|_2^2)\sqrt{\varepsilon}\bigg(|x^{i+1}|_2^2-|x^{i}|_2^2\bigg)\to \sqrt{\varepsilon}\int_{|x^0|_2^2}^{|x^m|_2^2} \zeta(t) dt.\]

It follows that for $\delta$ sufficiently small as a function of $\varepsilon$, with probability $1-e^{-\Omega_{\varepsilon,\delta}(N)}$:

\begin{align*}H_N(x^m)-H_N(x^0)&\geq \sqrt{\varepsilon}\int_{|x^0|^2}^{|x^m|_2^2} \zeta(t) dt-o_{\delta\to 0}(1)\\
&=\sqrt{\varepsilon}\int_{|x^0|_2^2}^{|x^0|_2^2+|x^m-x^0|_2^2} \zeta(t)dt-o_{\delta\to 0}(1). \end{align*}

Here the latter equality follows from~\eqref{eq:ortho}. Since $x^0$ was arbitrary and $x^m$ is an $\varepsilon$-corner, taking $\delta$ small enough depending on $(\varepsilon,\varepsilon',\eta)$ completes the proof of Theorem~\ref{thm:main2}. 

\end{proof}

\section*{Acknowledgement}

This work was supported by NSF and Stanford graduate research fellowships and was conducted while the author was visiting the Simons Institute for the Theory of Computing. We thank David Gamarnik for bringing this problem to our attention.

\bibliographystyle{alpha}
\bibliography{all-bib}

\end{document}